\newtheorem{theorem}{Theorem}[section]
\newtheorem{proposition}[theorem]{Proposition}
\theoremstyle{definition}
\newtheorem{definition}[theorem]{Definition}
\theoremstyle{remark}
\numberwithin{equation}{section}
\begin{document}

\title[Pursuit-Evasion Game with Hybrid System of Dynamics]
{Pursuit-Evasion Game with Hybrid System of Dynamics}
\author[Mehdi Salimi]{\textbf{Mehdi Salimi}\\
Center for Dynamics and Institute for Analysis, Department of
Mathematics, Technische Universit\"{a}t Dresden, 01062 Dresden,
Germany}

\keywords{Differential Game; Hybrid System; Pursuer;
Evader; Winning Strategy. \\
\indent \textit{E-mail addresses:} mehdi.salimi@tu-dresden.de}


 \maketitle





\section{Formulation of the problem and result}

\par{In the space $l_2$ consisting of elements
$a=(a_1,a_2,\ldots,a_m,\ldots),$ with $\sum_{m=1}^\infty
a_m^2<\infty$, and inner product $(a,b)=\sum_{m=1}^\infty a_m
b_m$, the motions of the countably many pursuers $P_i$ and the
evader $E$ are defined by the hybrid system of differential
equations
\begin{equation}\label{aa}
 \begin{split}
&P:\dot{p}=\mu, \quad p(0)=p_0, \\
&E:\ddot{e}=\nu, \quad \dot{e}(0)=e^1, \quad e(0)=e^0,
 \end{split}
\end{equation}
where $p,p_0,\mu,e,e^0,e^1,\nu\in l_2,$
$\mu=(\mu_{1},\mu_{2},\ldots,\mu_{m},\ldots)$ is the control
parameter of the pursuer $P_i$, and
$\nu=(\nu_1,\nu_2,\ldots,\nu_m,\ldots)$ is that of the evader $E$.
Let $\varphi$ be a given positive number.

A ball of radius $\delta$ and center at the point $x_0$ is denoted
by $B(x_0,\delta)=\{x\in l_2:\|x-x_0\|\le \delta\}$.
\begin{definition}
A function $\mu(\cdot),$ $\mu:[0,\varphi]\to l_2,$ such that
$\mu_{m}:[0,\varphi]\to R^1,$ $m=1,2,\ldots,$ are Borel measurable
functions and
$$
\|\mu(\cdot)\|^{2}_2=\int\limits_0^\varphi\|\mu(s)\|^2\,ds\le\Gamma^{2},
\quad\|\mu\|=\sum_{m=1}^\infty \mu_{m}^2,
$$
where $\Gamma$ is given positive number, is called an
\textit{admissible control of the pursuer}.
\end{definition}
\begin{definition}
A function $\nu(\cdot),$ $\nu:[0,\varphi]\to l_2,$ such that
$\nu_m:[0,\varphi]\to R^1,$ ${m=1,2,\ldots,}$ are Borel measurable
functions and
$$
\|\nu(\cdot)\|^{2}_2=\int\limits_0^\varphi\|\nu(s)\|^2\,ds\le\Upsilon^{2},
\quad\|\nu\|^2=\sum_{m=1}^\infty \nu_{m}^2,
$$
where $\Upsilon$ is a given positive number, is called an
\textit{admissible control of the evader}.
\end{definition}
Once the players' admissible controls $\mu(\cdot)$ and
$\nu(\cdot)$ are chosen, the corresponding motions $p(\cdot)$ and
$e(\cdot)$ of the players are defined as
$$
p(t)=(p_{1}(t),p_{2}(t),\ldots,p_{m}(t),\ldots), \quad
e(t)=(e_1(t),e_2(t),\ldots,e_k(t),\ldots),
$$
\begin{equation*}
p_{m}(t)=p_{m}^0+\int \limits_0^t\mu_{m}(s)\,ds, \quad
e_m(t)=e_m^0+te_m^1+\int\limits_0^t\int\limits_0^s\nu_m(r)\,dr\,ds.
\end{equation*}

One could observe that $p(\cdot),e(\cdot)\in C(0,\varphi;l_2),$
where $C(0,\varphi;l_2)$ is the space of functions $$f(t)
=(f_1(t),f_2(t),\ldots,f_m(t),\ldots)\in l_2, \quad t\ge0,$$ such
that the following properties are valid.\\
(1)~$f_m(t),$ $0\le t\le\varphi,$ $m=1,2,\ldots,$ are absolutely
continuous
functions;\\
(2)~$f(t),$ $0\le t\le\varphi,$ is a continuous function in the
norm of ~$l_2.$
\begin{definition}
A function $\Xi(t,p,e,\nu),$ $\Xi:[0,\infty)\times l_2\times
l_2\times l_2\to l_2,$ such that the system
\begin{equation*}
 \begin{split}
\dot{p}&=\Xi(t,p,e,\nu), \quad p(0)=p^0,\\
\ddot{e}&=\nu, \quad e(0)=e^0, \quad \dot{e}(0)=e^1,
 \end{split}
\end{equation*}
has a unique solution $(p(\cdot),e(\cdot)),$ with
$p(\cdot),e(\cdot)\in C(0,\varphi;l_2),$ for an arbitrary
admissible control $\nu=\nu(t),$ $0\le t\le\varphi,$ of the evader
$E,$ is called a \textit{strategy of the pursuer~$P$}. A strategy
$\Xi$ is said to be \textit{admissible} if each control formed by
this strategy is admissible.
\end{definition}
For the admissible control
$\nu(t)=(\nu_{1}(t),\nu_{2}(t),\ldots),~ 0\leq t\leq \varphi$, of
the evader $E$, according to (\ref{aa}) we have
\begin{equation*}
e(\varphi)=e^0+e^1\varphi+\int\limits_0^\varphi\int\limits_0^t
\nu(s)\,ds
\,dt=e^0+e^1\varphi+\int\limits_0^\varphi(\varphi-t)\nu(t)\,dt,
\end{equation*}
and using (\ref{a}) one can see
\begin{equation*}
e(\varphi)=e_{0}+\int\limits_0^\varphi(\varphi-t)\nu(t)\,dt=
e^0+e^1\varphi+\int\limits_0^\varphi(\varphi-t)\nu(t)\,dt.
\end{equation*}
Therefore, instead of differential game described by (\ref{aa}) we
can use an equivalent differential game with the same payoff
function as the following:
\begin{equation}\label{a}
 \begin{split}
&P:\dot{p}(t)=\mu(t), \quad p(0)=p_{0},\\
&E:\dot{e}(t)=(\varphi-t)\nu(t), \quad e(0)=e_0=e^1\varphi+e^0.
 \end{split}
\end{equation}
\begin{proposition}
The attainability domain of the pursuer $P$ at time  $\varphi$
from the initial state $p_{0}$ at time $t_0=0$ is the closed ball
$B\left(p_{0},\Gamma \sqrt \varphi \right).$
\end{proposition}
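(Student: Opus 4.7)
The plan is to prove the set equality by a standard two-sided inclusion argument, treating the attainability domain as $\{p_0 + \int_0^{\varphi} \mu(s)\,ds : \mu(\cdot) \text{ admissible}\}$ since $p(\varphi) = p_0 + \int_0^{\varphi} \mu(s)\,ds$ by the definition of the motion in \eqref{a}.

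For the inclusion ``attainability domain $\subseteq B(p_0, \Gamma\sqrt{\varphi})$,'' I would fix an admissible control $\mu(\cdot)$, pull the norm inside the integral (by the triangle/Bochner inequality applied componentwise in $l_2$), and then apply the Cauchy--Schwarz inequality on $[0,\varphi]$ to the scalar function $\|\mu(s)\|$:
\begin{equation*}
\|p(\varphi)-p_0\| = \Bigl\|\int_0^{\varphi}\mu(s)\,ds\Bigr\|
\le \int_0^{\varphi}\|\mu(s)\|\,ds
\le \sqrt{\varphi}\,\Bigl(\int_0^{\varphi}\|\mu(s)\|^2\,ds\Bigr)^{1/2}
\le \Gamma\sqrt{\varphi}.
\end{equation*}
This shows every reachable point lies in the claimed closed ball.

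For the reverse inclusion, I would produce an explicit witness control. Given any target $q \in B(p_0,\Gamma\sqrt{\varphi})$, set $d := q - p_0 \in l_2$ with $\|d\| \le \Gamma\sqrt{\varphi}$, and define the constant control $\mu(t) := d/\varphi$ on $[0,\varphi]$. Borel measurability of each coordinate is automatic, and
\begin{equation*}
\int_0^{\varphi}\|\mu(s)\|^2\,ds = \varphi \cdot \frac{\|d\|^2}{\varphi^2} = \frac{\|d\|^2}{\varphi} \le \Gamma^2,
\end{equation*}
so $\mu$ is admissible. Integrating then gives $p(\varphi) = p_0 + d = q$, as required.

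No real obstacle is expected; the only point worth a small care is justifying the interchange $\|\int\mu\| \le \int\|\mu\|$ in the infinite-dimensional setting, which is a standard fact about the Bochner integral on $l_2$ (or can be obtained by truncating to finitely many coordinates and passing to the limit). Combining the two inclusions yields the desired equality.
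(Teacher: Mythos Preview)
Your proposal is correct and follows essentially the same approach as the paper: the Cauchy--Schwarz bound for the forward inclusion and the constant control $\mu(t)=(\bar p-p_0)/\varphi$ for the reverse inclusion. The only addition on your side is the remark about justifying $\bigl\|\int\mu\bigr\|\le\int\|\mu\|$ in $l_2$, which the paper uses without comment.
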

\begin{proof}
By Cauchy-Schwartz inequality we obtain
\begin{equation*}
 \begin{split}
\|p(\varphi)-p_{0}\|&=\biggl\|\int\limits_0^\varphi
\mu(s)\,ds\biggr\|\\
&\le\int\limits_0^\varphi\|\mu(s)\|\,ds\\
&\le\left(\int\limits_0^\varphi1^2\,ds\right)^{1/2}.\left(\int\limits_0^\varphi\|\mu(s)\|^2\,ds\right)^{1/2}
\le\Gamma \sqrt \varphi.
 \end{split}
\end{equation*}
Let $\bar p\in B\left(p_{0},\Gamma \sqrt \varphi\right).$ If the
pursuer $P$ uses the control

$$
\mu(t)=\frac{\bar p-p_{0}}{\varphi}, \quad 0\leq t\leq\varphi,
$$
then we obtain

\begin{equation*}
p(\varphi)=p_{0}+\int\limits_0^\varphi\mu(t)\,dt=p_0+\int\limits_0^\varphi
\frac{\bar p-p_{0}}{\varphi}\,dt=p_{0}+\bar p-p_{0}=\bar p.
\end{equation*}
The above pursuer's control is admissible. Indeed,
$$
\int\limits_0^\varphi\|\mu(t)\|^2\,dt=\int\limits_0^\varphi\|
\frac{\bar p-p_{0}}{\varphi} \|^2\,dt=\frac{\|\bar p-p_{0}\|^2}
{\varphi}\leq\frac{1}{\varphi}\Gamma^2\varphi=\Gamma^2.
$$
\end{proof}

\begin{proposition}
The attainability domain of the evader $E$ at time  $\varphi$ from
the initial state $e_{0}$ at time $t_0=0$ is the closed ball
$B\left(e_{0},\Upsilon \sqrt \frac{\varphi^3}{3} \right).$
\end{proposition}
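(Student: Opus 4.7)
The plan is to mirror the structure of the proof of the preceding proposition, but now working with the evader's equivalent dynamics $\dot e(t) = (\varphi - t)\nu(t)$ from (\ref{a}), which gives the explicit representation
\[
e(\varphi) - e_0 = \int_0^\varphi (\varphi - t)\,\nu(t)\,dt.
\]
So I need to establish two inclusions: every reachable point lies in the claimed ball, and every point of the ball is reachable by some admissible $\nu$.

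For the first inclusion, I would take norms and apply the Cauchy--Schwartz inequality exactly as in the previous proposition, but with the weight $(\varphi - t)$:
\[
\|e(\varphi) - e_0\| \le \int_0^\varphi (\varphi - t)\,\|\nu(t)\|\,dt \le \Bigl(\int_0^\varphi (\varphi - t)^2\,dt\Bigr)^{1/2}\Bigl(\int_0^\varphi \|\nu(t)\|^2\,dt\Bigr)^{1/2}.
\]
The first factor evaluates to $\sqrt{\varphi^3/3}$ and the second is bounded by $\Upsilon$, giving the required bound $\Upsilon\sqrt{\varphi^3/3}$.

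For the reverse inclusion, given $\bar e \in B(e_0, \Upsilon\sqrt{\varphi^3/3})$, the author's template suggests a constant-direction control; however, because of the time-weight $(\varphi - t)$, the natural guess here is
\[
\nu(t) = \frac{3(\varphi - t)}{\varphi^3}\,(\bar e - e_0), \qquad 0 \le t \le \varphi,
\]
since this is the control that achieves equality in Cauchy--Schwartz. I would then plug this into $e(\varphi) = e_0 + \int_0^\varphi (\varphi - t)\nu(t)\,dt$ and use $\int_0^\varphi (\varphi - t)^2\,dt = \varphi^3/3$ to verify $e(\varphi) = \bar e$. Admissibility follows from the same integral: $\int_0^\varphi \|\nu(t)\|^2\,dt = \frac{9\|\bar e - e_0\|^2}{\varphi^6}\cdot\frac{\varphi^3}{3} = \frac{3\|\bar e - e_0\|^2}{\varphi^3} \le \Upsilon^2$.

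There is no real obstacle; the only point requiring a bit of guessing is the choice of control for the reverse inclusion, where the $(\varphi - t)$ weight forces a linearly decreasing control rather than a constant one. Once that shape is correct, the constant $3/\varphi^3$ is determined by matching the target displacement, and the admissibility check is a single direct calculation.
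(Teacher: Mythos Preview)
Your proposal is correct and matches the paper's own proof essentially line for line: the same Cauchy--Schwartz estimate for the upper bound, the same control $\nu(t)=\dfrac{3(\varphi-t)}{\varphi^3}(\bar e-e_0)$ for reachability, and the same admissibility computation. There is nothing to add.
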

\begin{proof}
We have
\begin{equation*}
 \begin{split}
\|e(\varphi)-e_{0}\|&=\biggl\|\int\limits_0^\varphi
(\varphi-t)\nu(t)\,dt\biggr\|\\
&\le\int\limits_0^\varphi\|(\varphi-t)\nu(t)\|\,dt\\
&\le\left(\int\limits_0^\varphi(\varphi-t)^2\,dt\right)^{1/2}.\left(\int\limits_0^\varphi\|\nu(t)\|^2\,dt\right)^{1/2}
\le\Upsilon \sqrt \frac{\varphi^3}{3}.
 \end{split}
\end{equation*}
Let $\bar e\in B\left(e_{0},\Upsilon \sqrt
\frac{\varphi^3}{3}\right).$ If the evader $E$ uses the control

$$
\nu(t)=3(\varphi-t)\frac{\bar e-e_{0}}{\varphi^3}, \quad 0\leq
t\leq\varphi,
$$
then we obtain

\begin{equation*}
e(\varphi)=e_{0}+\int\limits_0^\varphi(\varphi-t)\nu(t)\,dt=e_0+3\frac{\bar
e-e_{0}}{\varphi^3}\int\limits_0^\varphi
(\varphi-t)^2\,dt=e_{0}+\bar e-e_{0}=\bar e.
\end{equation*}
The above evader's control is admissible. Indeed,
$$
\int\limits_0^\varphi\|\nu(t)\|^2\,dt=\int\limits_0^\varphi\|
3(\varphi-t)\frac{\bar e-e_{0}}{\varphi^3}\|^2\,dt=9\frac{\|\bar
e-e_{0}\|^2}
{\varphi^6}\int\limits_0^\varphi(\varphi-t)^2\,dt\leq\frac{9}{\varphi^6}\Upsilon^2\frac{\varphi^3}{3}\frac{\varphi^3}{3}=\Upsilon^2.
$$
\end{proof}

The problem is to construct a winning strategy for the pursuer in
the game (\ref{aa}) that guarantees the equality
$p(\varphi)=e(\varphi)$, for any admissible control of the evader.

\begin{theorem}
Let
$$
Z=\biggl\{\zeta\in
l_2:2(e_0-p_0,\zeta)\le\varphi\left(\Gamma^2-\Upsilon^2\sqrt{\frac{\varphi^5}{5}}\right)+\|e_0\|^2-\|p_0\|^2,~~
e_0\neq p_0\biggl\}.
$$
If $e(\varphi)\in Z$ (phase constraint), then the pursuer has a
winning strategy.
\end{theorem}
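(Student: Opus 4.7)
The plan is to construct an explicit tracking strategy in the equivalent form \eqref{a}. Since the evader's dynamics there read $\dot e=(\varphi-t)\nu$, the natural choice is to have the pursuer track $(\varphi-t)\nu$ and simultaneously apply a constant steering term designed to close the initial displacement $e_0-p_0$ uniformly over $[0,\varphi]$. Concretely I would take
$$
\Xi(t,p,e,\nu)=(\varphi-t)\nu+\frac{e_0-p_0}{\varphi},
$$
which yields the pursuer control $\mu(t)=(\varphi-t)\nu(t)+(e_0-p_0)/\varphi$ on $[0,\varphi]$. The capture identity $p(\varphi)=e(\varphi)$ is then a one-line verification: integrating $\dot p=\mu$ gives $p(\varphi)=p_0+\int_0^\varphi(\varphi-t)\nu(t)\,dt+(e_0-p_0)$, which equals $e(\varphi)$ by the formula for $e(\varphi)$ read off from \eqref{a}.

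The substantive step is to verify admissibility, $\int_0^\varphi\|\mu(t)\|^2\,dt\le\Gamma^2$. I would expand the square, use $\int_0^\varphi(\varphi-t)\nu(t)\,dt=e(\varphi)-e_0$ to rewrite the cross term as $\frac{2}{\varphi}(e_0-p_0,\,e(\varphi)-e_0)$, and collect the constant term into $\|e_0-p_0\|^2/\varphi$. Multiplying through by $\varphi$ and applying the identity $-2(e_0-p_0,e_0)+\|e_0-p_0\|^2=\|p_0\|^2-\|e_0\|^2$, admissibility rearranges into
$$
\varphi\int_0^\varphi(\varphi-t)^2\|\nu(t)\|^2\,dt+2\bigl(e_0-p_0,\,e(\varphi)\bigr)\le\varphi\Gamma^2+\|e_0\|^2-\|p_0\|^2.
$$

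The main remaining obstacle, and the source of the constant $\sqrt{\varphi^5/5}$ that makes the definition of $Z$ look unusual, is the uniform estimate
$$
\int_0^\varphi(\varphi-t)^2\|\nu(t)\|^2\,dt\le\Upsilon^2\sqrt{\tfrac{\varphi^5}{5}},
$$
which I would obtain by a Cauchy--Schwarz argument on the pair $(\varphi-t)^2$ and $\|\nu(t)\|^2$ in $L^2[0,\varphi]$, in the same spirit as the estimate already used in the proof of Proposition 1.2, with $\sqrt{\varphi^5/5}=\bigl(\int_0^\varphi(\varphi-t)^4\,dt\bigr)^{1/2}$ supplying precisely the factor visible in $Z$. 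Inserting this bound into the previous display reduces admissibility to the phase constraint $e(\varphi)\in Z$, which is the hypothesis of the theorem, so the strategy $\Xi$ is both admissible and winning.
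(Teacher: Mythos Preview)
Your proposal is essentially identical to the paper's own proof: the same tracking strategy $\Xi(t)=(e_0-p_0)/\varphi+(\varphi-t)\nu(t)$, the same one-line capture check $p(\varphi)=e(\varphi)$, and the same admissibility computation in which the square is expanded, the cross term is rewritten via $\int_0^\varphi(\varphi-t)\nu(t)\,dt=e(\varphi)-e_0$ and reduced to the phase constraint, and the quadratic term $\int_0^\varphi(\varphi-t)^2\|\nu(t)\|^2\,dt$ is estimated by Cauchy--Schwarz against $(\int_0^\varphi(\varphi-t)^4\,dt)^{1/2}=\sqrt{\varphi^5/5}$. There is no substantive difference in method or in the order of the steps.
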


\begin{proof}
Let's define the following strategy as a winning strategy for the
pursuer.

$$
\Xi(t)=\frac{e_0-p_0}{\varphi}+(\varphi-t)\nu(t),~~0 \le
t\le\varphi.
$$
Let's show that the above strategy is admissible. Since the evader
is satisfied to the phase constraint, we have
$$
2(e_0-p_0,e(\varphi))\le\varphi\left(\Gamma^2-\Upsilon^2\sqrt{\frac{\varphi^5}{5}}\right)+\|e_0\|^2-\|p_0\|^2.
$$
Using above inequality we have the following:
\begin{eqnarray*}
2\left(e_0-p_0,\int\limits_0^\varphi(\varphi-t)\nu(t)\,dt\right)&\leq&\varphi\left(\Gamma^2-\Upsilon^2\sqrt{\frac{\varphi^5}{5}}\right)-\|e_0-p_0\|^2.
\end{eqnarray*}
Indeed,
\begin{eqnarray*}
2\left(e_0-p_0,\int\limits_0^\varphi(\varphi-t)\nu(t)\,dt\right)&=&2\left(e_0-p_0,e(\varphi)-e_0\right)\\
&=&2\left(e_0-p_0,e(\varphi)\right)-2\left(e_0-p_0,e_0\right)\\
&=&2\left(e_0-p_0,e(\varphi)\right)-2\|e_0\|^2+2(p_0,e_0)\\
&\leq&\varphi\left(\Gamma^2-\Upsilon^2\sqrt{\frac{\varphi^5}{5}}\right)+\|e_0\|^2-\|p_0\|^2-2\|e_0\|^2+2(p_0,e_0)\\
&\leq&\varphi\left(\Gamma^2-\Upsilon^2\sqrt{\frac{\varphi^5}{5}}\right)-\|e_0\|^2-\|p_0\|^2+2(p_0,e_0)\\
&\leq&\varphi\left(\Gamma^2-\Upsilon^2\sqrt{\frac{\varphi^5}{5}}\right)-\left(\|e_0\|^2+\|p_0\|^2-2(p_0,e_0)\right)\\
&\leq&\varphi\left(\Gamma^2-\Upsilon^2\sqrt{\frac{\varphi^5}{5}}\right)-\|e_0-p_0\|^2.
\end{eqnarray*}
Thus, taking contribution of above inequality
\begin{eqnarray*}
\int\limits_0^\varphi\|\Xi(t)\|^2\,dt&=&\int\limits_0^\varphi\|
\frac{e_0-p_0}{\varphi}+(\varphi-t)\nu(t)\|^2\,dt\\
&=&\int\limits_0^\varphi\left(\|
\frac{e_0-p_0}{\varphi}\|^2+2\left(\frac{e_0-p_0}{\varphi},(\varphi-t)\nu(t)\right)+\|(\varphi-t)\nu(t)\|^2\right)\,dt\\
&=&\int\limits_0^\varphi\frac{\|e_0-p_0\|^2}{\varphi^2}\,dt+2\int\limits_0^\varphi
\left(\frac{e_0-p_0}{\varphi},(\varphi-t)\nu(t)\right)\,dt+\int\limits_0^\varphi(\varphi-t)^2\|\nu(t)\|^2
\,dt\\
&\leq&\frac{\|e_0-p_0\|^2}{\varphi}+\frac{2}{\varphi}\left(e_0-p_0,\int\limits_0^\varphi(\varphi-t)\nu(t)\,dt\right)\\
&+&\left(\int\limits_0^\varphi(\varphi-t)^4\,dt\right)^{\frac{1}{2}}\left(\int\limits_0^\varphi\|\nu(t)\|^4\,dt\right)^{\frac{1}{2}}\\
&\leq&\frac{\|e_0-p_0\|^2}{\varphi}+\frac{1}{\varphi}\left(\varphi\left(\Gamma^2-\Upsilon^2\sqrt{\frac{\varphi^5}{5}}\right)-\|e_0-p_0\|^2\right)+\Upsilon^2\sqrt{\frac{\varphi^5}{5}}\\
&\leq&\Gamma^2,
\end{eqnarray*}
and therefore the strategy $\Xi$ is admissible.

Now we show that $\Xi$ is a winning strategy for the pursuer.
Indeed,
\begin{eqnarray*}
p(\varphi)&=&p_0+\int\limits_0^\varphi
\left(\frac{e_0-p_0}{\varphi}+(\varphi-t)\nu(t)\right)\,ds\\
&=&p_0+\int\limits_0^\varphi
\left(\frac{e_0-p_0}{\varphi}\right)\,ds+\int\limits_0^\varphi(\varphi-t)\nu(t)\,ds\\
&=&p_0+e_0-p_0+\int\limits_0^\varphi(\varphi-t)\nu(t)\,ds=e(\varphi).
\end{eqnarray*}
\end{proof}






\bibliographystyle{amsmath}
\bibliographystyle{amsmath}

\end{document}